\numberwithin{equation}{section}
\theoremstyle{plain}
\newtheorem{theorem}{Theorem}[section]
\newtheorem{lemma}[theorem]{Lemma}
\theoremstyle{definition}
\newtheorem{definition}[theorem]{Definition}
\theoremstyle{remark}
\newtheorem{case[theorem]}{Case}
\def \F {\mathbb{F}}
\date{November 6, 2017}      
\author{P. Birklbauer, A. Iosevich and T. Pham}
\address{Department of Mathematics, University of Rochester, Rochester, NY}
\email{philipp.birklbauer@rochester.edu}
\address{Department of Mathematics, University of Rochester, Rochester, NY}
\email{iosevich@math.rochester.edu}
\address{Department of Mathematics, University of California, San Diego}
\email{v9pham@ucsd.edu}
\thanks{The second listed author was partially supported by the NSA Grant H98230-15-1-0319. The third listed author was supported by Swiss National Science Foundation grant P2ELP2-175050.}
\title{Distances from points to planes} 
\begin{document}
\maketitle
%    Information for first author
\begin{abstract} We prove that if $E \subset {\Bbb F}_q^d$, $d \ge 2$, $F \subset \operatorname{Graff}(d-1,d)$, the set of affine $d-1$-dimensional planes in ${\Bbb F}_q^d$, then $|\Delta(E,F)| \ge \frac{q}{2}$ if $|E||F|>q^{d+1}$, where $\Delta(E,F)$ the set of distances from points in $E$ to lines in $F$. In dimension three and higher this significantly improves the exponent obtained by Pham, Phuong, Sang, Vinh and Valculescu (\cite{PPSVV17})  \end{abstract}  

\maketitle
%\tableofcontents

\section{Introduction} 

The Erd\H os-Falconer distance problem in ${\Bbb F}_q^d$ is to determine how large $E \subset {\Bbb F}_q^d$ needs to be to ensure that 
$$\Delta(E)=\{||x-y||: x,y \in E\},$$ with $||x||=x_1^2+x_2^2+\dots+x_d^2$, is the whole field ${\Bbb F}_q$, or at least a positive proportion thereof. Here and throughout, ${\Bbb F}_q$ denotes the field with $q$ elements and ${\Bbb F}_q^d$ is the $d$-dimensional vector space over this field. 

The distance problem in vector spaces over finite fields was introduced by Bourgain, Katz and Tao in \cite{BKT04}. In the form described above, it was introduced by the second listed author of this paper and Misha Rudnev (\cite{IR07}), who proved that $\Delta(E)={\Bbb F}_q$ if $|E|>2q^{\frac{d+1}{2}}$. It was shown in \cite{HIKR11} that this exponent is essentially sharp for general fields when $d$ is odd. When $d=2$, it was proved in \cite{CEHIK10} that if if $E \subset {\Bbb F}_q^2$ with $|E| \ge cq^{\frac{4}{3}}$, then $|\Delta(E)| \ge C(c)q$. We do not know if improvements of the $\frac{d+1}{2}$ exponent are possible in even dimensions $\ge 4$. We also do not know if improvements of the $\frac{d+1}{2}$ exponent are possible in any even dimension if we wish to conclude that $\Delta(E)={\Bbb F}_q$, not just a positive proportion. 

More generally, let $\operatorname{Graff}(k,d)$ denote the set of $k$-dimensional affine planes in ${\Bbb F}_q^d$. In this paper we shall focus on distances from points in subsets of $\operatorname{\operatorname{Graff}}(0,d)={\Bbb F}_q^d$ to $d-1$-dimensional planes in subsets of $\operatorname{\operatorname{Graff}}(d-1,d)$. The set of distances from points to points (see e.g. \cite{IR07}) can be defined as the set of equivalence classes of two-point configurations where two pairs $(x,y)$ and $(x',y')$ are equivalent if there exists a translation $\tau \in {\Bbb F}_q^d$ and a rotation $\theta \in O_d({\Bbb F}_q^d)$ that takes one pair to the other. In the case of points and $d-1$-dimensional planes in ${\Bbb F}_q^d$, we may similarly define $(x,h)$ and $(x',h')$ to be equivalent, where $x$'s are points and $h$'s are planes, if after translation $x$ to $x'$, there exists a rotation $\theta \in O_d({\Bbb F}_q)$ that takes $h$ to $h'$. Denote the resulting set of equivalence classes by $\Delta(E,F)$. 

Before stating our main results, we need to say a few words about the parameterization of $(d-1)$-dimensional planes in ${\Bbb F}_q^d$. A $(d-1)$-dimensional plane in ${\Bbb F}_q^d$ can be expressed in the form 
$$ H_{v,t}=\{y \in {\Bbb F}_q^d: y \cdot v=t \}, $$ where we should think of $v$ as a normal vector to the plane and $t$ as the distance to the origin. Note that the notion of distance from a point to a plane described above only makes sense if $||v|| \not=0$. We shall henceforth refer to planes with this property as {\it non-degenerate} planes. See Lemma \ref{invariance} below. 

\begin{definition} We say that $V \subset {\Bbb F}_q^d$ is a direction set if given $x \in {\Bbb F}_q^d$, $x \not=\vec{0}$, there exists $v \in V$ and 
$t \in {\Bbb F}_q^{*}$ such that $x=tv$. \end{definition} 

It is very convenient to work with a "canonical" direction set provided by the following simple observation. 

\begin{lemma} \label{normalstructure} Let $S_t=\{x \in {\Bbb F}_q^d: ||x||=t \}$. Let $\gamma \in {\Bbb F}_q^{*}$ be a non-square. Define $V_{\gamma}=S_0 \cup S_1 \cup S_{\gamma}$. Then $V_{\gamma}$ is a direction set. \end{lemma}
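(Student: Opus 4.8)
The plan is to show that every nonzero vector $x \in \mathbb{F}_q^d$ is a nonzero scalar multiple of some vector lying on one of the three spheres $S_0$, $S_1$, or $S_\gamma$. The key observation is that scaling acts quadratically on the norm: if $x = tv$ with $t \in \mathbb{F}_q^*$, then $\|x\| = t^2\|v\|$. So the question of which sphere $x$ is "aligned with" is governed entirely by the value of $\|x\|$ modulo the subgroup of squares in $\mathbb{F}_q^*$.

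First I would split into cases according to the value of $\|x\|$. If $\|x\| = 0$, then $x \in S_0$ already, so we may take $v = x$ and $t = 1$. If $\|x\| \neq 0$, then since the squares form an index-two subgroup of the cyclic group $\mathbb{F}_q^*$ (for $q$ odd), $\|x\|$ is either a nonzero square or a non-square. In the first case, write $\|x\| = s^2$ for some $s \in \mathbb{F}_q^*$; then setting $v = s^{-1}x$ gives $\|v\| = s^{-2}\|x\| = 1$, so $v \in S_1$, and $x = s v$ exhibits $x$ as a scalar multiple of a point on $S_1$. In the second case, $\|x\|$ is a non-square, so $\|x\|/\gamma$ is a square, say $\|x\|/\gamma = s^2$; then $v = s^{-1}x$ satisfies $\|v\| = s^{-2}\|x\| = \gamma$, placing $v \in S_\gamma$, and again $x = sv$.

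Combining the three cases shows that for every nonzero $x$ there is a $v \in V_\gamma = S_0 \cup S_1 \cup S_\gamma$ and a $t \in \mathbb{F}_q^*$ with $x = tv$, which is exactly the direction-set condition. I do not anticipate a genuine obstacle here; the only point requiring care is the implicit assumption that $q$ is odd (so that $\mathbb{F}_q^*$ has a well-defined square/non-square dichotomy and a non-square $\gamma$ exists), together with the standard fact that $|\mathbb{F}_q^{*2}| = (q-1)/2$ and that the non-squares are precisely $\gamma \cdot \mathbb{F}_q^{*2}$. These are elementary facts about the cyclic structure of $\mathbb{F}_q^*$, so the proof reduces to the short case analysis above.
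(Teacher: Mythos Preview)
Your proof is correct and follows essentially the same three-case split as the paper's proof (according as $\|x\|$ is zero, a nonzero square, or a non-square). The only cosmetic difference is that where you invoke the cyclic structure of $\mathbb{F}_q^*$ to conclude that a quotient of two non-squares is a square, the paper supplies a short bijection argument for this fact; your remark that $q$ must be odd is a useful clarification the paper leaves implicit.
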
 

\vskip.125in 

To prove this, choose $x$ such that $||x||=0$. Then $x \in S_0$. Now choose $x$ such that $||x||=t^2$ for some $t \not=0$. Then ${\left( \frac{x_1}{t} \right)}^2+{\left( \frac{x_2}{t} \right)}^2=1$, so $x=tv$ with $v \in S_1$. Finally, suppose that $||x||=u$ where $u$ is not a square in ${\Bbb F}_q^{*}$. To see that $x=tv$ for some $v \in S_{\gamma}$, it is enough to check that $u\gamma^{-1}$ is a square in ${\Bbb F}_q$. Moreover, it is enough to prove that a product of two non-squares is a square. To see this, let $\phi: {\Bbb F}_q^{*} \to {\Bbb F}_q^{*}$ given by $\phi(x)=ux$, where $u$ is a non-square. The image of a square is certainly a non-square since otherwise $u$ would be forced to be a square. It follows that an image of a non-square is a square since exactly half the elements of ${\Bbb F}_q^{*}$ are squares. This completes the proof of Lemma \ref{normalstructure}. 

\vskip.125in 

Our main result is the following. 

%\begin{theorem} \label{mainplane} Let $E \subset {\Bbb F}_q^2$, $q \equiv 3 \mod 4$. Let $F$ be a subset of non-degenerate lines in $\operatorname{\operatorname{Graff}}(1,2)$. Suppose that $|E|{|F|}^{\frac{1}{2}}>q^2$. Let $\gamma$ be a non-square in ${\Bbb F}_q^{*}$. Then $|\Delta(E,F)|>\frac{q}{2}$. More precisely, 
%$$ |\Delta(E,F)| \ge \frac{ {|E|}^2{|F|}^2}{{|E|}^2{|F|}^2q^{-1}+\sqrt{3}|F|{|E|}^{\frac{3}{2}} \cdot \max_{||v||=1,\gamma} \sum_t F(v,t)}.$$ 
%\end{theorem} 
%
%\vskip.125in 

\begin{theorem} \label{maind}  Let $E \subset {\Bbb F}_q^d$, $d \ge 2$, and $F$ be a subset of non-degenerate planes in $\operatorname{Graff}(d-1,d)$, $d \ge 2$. Let $\gamma$ be a non-square in ${\Bbb F}_q$. Suppose that $|E||F|>q^{d+1}$. Then $|\Delta(E,F)|\gg q$. More precisely, 
$$ |\Delta(E,F)| \ge \frac{ {|E|}^2{|F|}^2}{2{|E|}^2{|F|}^2q^{-1}+2q^{d-1}|E||F| \cdot \max_{||v||=1,\gamma} \sum_t F(v,t)}.$$ 
\end{theorem}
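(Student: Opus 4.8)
The plan is to run a Cauchy--Schwarz/Fourier counting argument tailored to the point-to-plane distance. First I would record the explicit form of the distance function: by Lemma~\ref{invariance} the translation- and rotation-invariant distance from $x$ to a non-degenerate plane $H_{v,t}$ is $\delta(x,H_{v,t})=(x\cdot v-t)^2/\lVert v\rVert$, and by Lemma~\ref{normalstructure} every non-degenerate plane has a representative with $\lVert v\rVert\in\{1,\gamma\}$. Write $E(x)$, $F(v,t)$ for the indicator functions in this normalized parametrization, put $f(v)=\sum_t F(v,t)$, and set $\lambda(r)=\sum_{x,v,t}E(x)F(v,t)\mathbf 1[\delta(x,H_{v,t})=r]$, so that $\sum_r\lambda(r)=|E||F|$ with $|F|=\sum_{v,t}F(v,t)$. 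Cauchy--Schwarz gives $|\Delta(E,F)|\ge (|E||F|)^2/T$ with $T=\sum_r\lambda(r)^2$, so the whole problem reduces to the upper bound $T\le 2|E|^2|F|^2q^{-1}+2q^{d-1}|E||F|\max_v f(v)$.

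Second, I would analyze the ``isosceles'' count $T$, which is the number of sextuples $(x,v,t,x',v',t')$, weighted by the indicators, with $(x\cdot v-t)^2/\lVert v\rVert=(x'\cdot v'-t')^2/\lVert v'\rVert$. Writing $a=x\cdot v-t$ and $b=x'\cdot v'-t'$, the defining relation is $a^2\lVert v'\rVert=b^2\lVert v\rVert$ with $\lVert v\rVert,\lVert v'\rVert\in\{1,\gamma\}$. The arithmetic fact already used in Lemma~\ref{normalstructure}, that a square times a non-square is a non-square, forces this only when either $a=b=0$, or $\lVert v\rVert=\lVert v'\rVert$ and $a=\pm b$. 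Inclusion--exclusion over these cases gives $T=P_++P_-+I^2-2Q_{=}$, where $P_\pm$ count solutions with $\lVert v\rVert=\lVert v'\rVert$ and $a=\pm b$, $I$ is the total number of point--plane incidences, and $Q_{=}$ counts incident pairs with equal norms. Splitting $I=I_1+I_\gamma$ by the norm class and using $(I_1+I_\gamma)^2\le 2(I_1^2+I_\gamma^2)=2Q_{=}$ then yields the clean inequality $T\le P_++P_-$.

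Third, I would expand $P_\pm$ by Fourier analysis. Detecting the linear constraints $a=\pm b$ with $q^{-1}\sum_m\chi(\cdot)$ and introducing $\widehat E(\xi)=\sum_x E(x)\chi(\xi\cdot x)$ and $\widehat{F_v}(m)=\sum_t F(v,t)\chi(mt)$, the sums factor through $\Phi_c(m)=\sum_{\lVert v\rVert=c}\widehat E(mv)\overline{\widehat{F_v}(m)}$, giving $P_+=q^{-1}\sum_m(|\Phi_1(m)|^2+|\Phi_\gamma(m)|^2)$ and the analogue for $P_-$ with $\Phi_c(m)^2$ in place of $|\Phi_c(m)|^2$. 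The $m=0$ term contributes $\Phi_c(0)=|E|f_c$, where $f_c=\sum_{\lVert v\rVert=c}f(v)$, so the combined main term is $2q^{-1}|E|^2(f_1^2+f_\gamma^2)\le 2q^{-1}|E|^2(f_1+f_\gamma)^2=2|E|^2|F|^2q^{-1}$, which is exactly the first term of the claimed denominator.

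Finally --- and this is where the real content lies --- I would estimate the $m\neq0$ error. Using $|\Phi_c(m)^2|\le|\Phi_c(m)|^2$ and Cauchy--Schwarz in $v$ gives $|\Phi_c(m)|^2\le\bigl(\sum_{\lVert v\rVert=c}|\widehat E(mv)|^2\bigr)\bigl(\sum_{\lVert v\rVert=c}|\widehat{F_v}(m)|^2\bigr)$. On the plane side, Plancherel in $t$ together with $|\widehat{F_v}(m)|^2\le f(v)^2$ yields the uniform-in-$m$ bound $\sum_{\lVert v\rVert=c}|\widehat{F_v}(m)|^2\le f_c\max_v f(v)$; on the point side, the map $(m,v)\mapsto mv$ covers each frequency with bounded multiplicity, so Plancherel on $\mathbb{F}_q^d$ gives $\sum_{m\neq0}\sum_{\lVert v\rVert=c}|\widehat E(mv)|^2\ll q^d|E|$. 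Multiplying these and summing over $c\in\{1,\gamma\}$ produces an error term of the shape $q^{d-1}|E||F|\max_v f(v)$; tracking the constants (and, if needed, restricting $v$ to $\pm$-representatives to remove the sphere-covering multiplicity) gives the factor in the statement. The main obstacle is precisely this last estimate: isolating the $\max_v f(v)$ factor while retaining the full $q^{d-1}$ saving, which hinges on the uniform-in-$m$ control of $\sum_{\lVert v\rVert=c}|\widehat{F_v}(m)|^2$ and on the covering multiplicity of $(m,v)\mapsto mv$. The advertised threshold $|E||F|>q^{d+1}$ then follows from the trivial bound $\max_v f(v)\le q$, which makes the main term dominate the error and forces $|\Delta(E,F)|\gg q$.
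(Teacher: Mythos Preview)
Your argument is correct, but the route differs from the paper's in one essential respect. The paper does \emph{not} expand $T=\sum_r\lambda(r)^2$ as a sextuple count; instead it applies a second Cauchy--Schwarz in the plane variable, $\sum_r\nu(r)^2\le |F|\sum_{x,x',v,t}E(x)E(x')F(v,t)\,\mathbf 1[d(x,H_{v,t})=d(x',H_{v,t})]$, so that both points are equidistant from the \emph{same} plane. The constraint then collapses to the two linear conditions $(x-x')\cdot v=0$ and $(x+x')\cdot v=2t$, each handled by a single character sum producing $|\widehat E(sv)|^2$; the term $\max_v\sum_t F(v,t)$ is pulled out trivially, and the $\widehat{F_v}$ transforms, the $\|v\|=\|v'\|$ casework, and the incidence cross-term $I^2-2Q_{=}$ that you manage never arise. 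Your direct expansion is in principle sharper (you bound $T$ itself rather than $|F|$ times a diagonal count), but the gain is then surrendered by the Cauchy--Schwarz in $v$ on $|\Phi_c(m)|^2$, which is essentially the same loss in reverse; the two arguments therefore land on the same estimate up to the sphere-covering constant. The paper's version is shorter and avoids the $\widehat{F_v}$ machinery, while yours keeps more of the bilinear structure visible and could conceivably be tightened if one had nontrivial control on $\sum_{\|v\|=c}|\widehat{F_v}(m)|^2$ beyond the pointwise bound $f(v)^2$.
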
 

\vskip.125in 

When $d=2$, a better exponent was obtained by Pham, Phuong, Sang, Vinh and Valculescu (\cite{PPSVV17}). They proved that the conclusion of Theorem \ref{maind} holds in ${\Bbb F}_q^2$ if $|E||F|>Cq^{\frac{8}{3}}$. 

It is not clear if it is possible to weaken the $|E||F|>q^{d+1}$ assumption in higher dimensions. It is not difficult to see that we cannot do better than assuming $|E||F|>q^d$. To see this, take $q=p^2$, $p$ prime, let $E={\Bbb F}_p^d$ and $F$ be the set of all $(d-1)$-dimensional affine planes in ${\Bbb F}_p^d$. Then $|E| \approx |F| \approx q^{\frac{d}{2}}$ while $\Delta(E,F)=p$. 

\vskip.25in 

\section{Proof of Theorem \ref{maind}} 
\vskip.125in 
We begin with a couple simple algebraic observations that make working with $\Delta(E,F)$ much easier. Given $F \subset \operatorname{Graff}(d-1,d)$, we write the indicator function of $F$ in the form $F(v,t)$, where each plane in $\operatorname{\operatorname{Graff}}(d-1,d)$ is parameterized by $(v,t) \in V_{\gamma} \times {\Bbb F}_q$, where $V_{\gamma}$ is as in Lemma \ref{normalstructure}. For a point $x\in E$ and a plane $F(v, t)\in F$, the distance function between them, denoted by $d[x, F(v, t)]$, is defined by
\[d[x, F(v, t)]:=\frac{(x \cdot v-t)^2}{||v||}.\]
In the following lemma, we show that the size of $\Delta(E, F)$ is at least the number of distinct non-zero distances between points in $E$ and planes in $F$. 
\begin{lemma} \label{invariance} Let $F \subset \operatorname{\operatorname{Graff}}(d-1,d)$ be parameterized as above, with coordinates $(v,t) \in V_{\gamma} \times {\Bbb F}_q$, where $||v|| \not=0$. Then 
\[ |\Delta(E,F)|\ge \#\left\{\frac{(x \cdot v-t)^2}{||v||}\ne 0: x \in E; (v,t) \in F\right \}.\] 
\end{lemma}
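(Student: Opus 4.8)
The plan is to show that the explicitly defined distance function
$$ d[x, H_{v,t}] = \frac{(x \cdot v - t)^2}{||v||} $$
is invariant under the group of rigid motions---translations composed with rotations $\theta \in O_d({\Bbb F}_q)$---that defines the equivalence relation on point-plane pairs. Once invariance is established, $d$ is constant on each equivalence class, so it descends to a well-defined function on $\Delta(E,F)$. The number of distinct nonzero values such a function can take is then automatically bounded above by the number of equivalence classes $|\Delta(E,F)|$, which is precisely the claimed inequality.

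First I would check that $d$ is independent of the parameterization of a plane: replacing $(v,t)$ by $(\lambda v, \lambda t)$ for $\lambda \in {\Bbb F}_q^{*}$ leaves the quotient $(x \cdot v - t)^2/||v||$ unchanged, so $d$ depends only on the plane $H_{v,t}$ and the point $x$. This is consistent with the canonical direction set of Lemma \ref{normalstructure}, on which a non-degenerate plane has $||v|| \in \{1, \gamma\}$, so that the value is in particular well defined on $F$.

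Next I would verify the two invariances separately. For a rotation $\theta \in O_d({\Bbb F}_q)$, which preserves the bilinear form, a direct computation gives $\theta(H_{v,t}) = H_{\theta v, t}$; then, using $\theta x \cdot \theta v = x \cdot v$ and $||\theta v|| = ||v||$, one obtains $d[\theta x, H_{\theta v, t}] = d[x, H_{v,t}]$. For a translation by a vector $a$, one checks that $H_{v,t}$ is carried to $H_{v, t + a \cdot v}$, and that
$$ \frac{((x+a) \cdot v - (t + a \cdot v))^2}{||v||} = \frac{(x \cdot v - t)^2}{||v||} = d[x, H_{v,t}]. $$
Composing these shows $d$ is invariant under the full group, hence constant on equivalence classes.

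Finally, I would assemble the bound. Since equivalent pairs share the same value of $d$, any two pairs in $E \times F$ with distinct nonzero distances must lie in distinct equivalence classes. Therefore the number of distinct nonzero values of $(x \cdot v - t)^2/||v||$ over $x \in E$ and $(v,t) \in F$ is at most the number of such classes, i.e.\ at most $|\Delta(E,F)|$. The step requiring the most care is the geometric verification that rotations act on the normal-vector parameterization by $v \mapsto \theta v$ while fixing the offset $t$, together with the compatibility of the orthogonal group with the bilinear form over ${\Bbb F}_q$; the remaining identities are routine algebra.
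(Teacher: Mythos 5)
Your proof is correct, but it runs in the opposite logical direction from the paper's own argument. You establish that the quantity $(x\cdot v-t)^2/||v||$ is \emph{invariant} under the rigid motions defining the equivalence relation: translation by $a$ sends $H_{v,t}$ to $H_{v,\,t+a\cdot v}$ and a rotation sends it to $H_{\theta v,\,t}$, with the quotient unchanged in both cases (and unchanged under rescaling $(v,t)\mapsto(\lambda v,\lambda t)$ of the parameterization). Hence equivalent pairs have equal distances, distinct nonzero distances land in distinct classes, and the stated lower bound on $|\Delta(E,F)|$ follows. The paper instead proves the converse implication: if two pairs have the same nonzero distance, then a translation followed by a rotation carries one to the other; this uses the argument of Lemma \ref{normalstructure} to deduce $||v||=||v'||$ from the fact that $||v||/||v'||$ is a square, together with the transitivity of $O_d(\F_q)$ on vectors of a fixed nonzero norm. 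That converse shows the distance is a complete invariant of such pairs, but by itself it yields the complementary inequality (it makes each equivalence class a union of level sets of the distance), so strictly speaking it is your direction, not the one written in the paper, that delivers $|\Delta(E,F)|\ge \#\{(x\cdot v-t)^2/||v||\ne 0\}$ as stated; the two implications together identify the nonzero distance set with the corresponding part of $\Delta(E,F)$. Your route is also more elementary, since it avoids any transitivity statement for the orthogonal group; the only points requiring care, which you correctly flag, are $\theta x\cdot\theta v=x\cdot v$ and $||\theta v||=||v||$ for $\theta\in O_d(\F_q)$, and the fact that every rigid motion taking $x$ to $x'$ factors as the translation by $x'-x$ followed by a rotation about $x'$.
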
 
\begin{proof}
To prove this lemma, it is enough to indicate that for $x,x' \in E$ and $(v,t), (v',t') \in F$, if $d[x, F(v, t)]=d[x', F(v', t')]$, then there is a rotation $\theta$ such that the translation from $x$ to $x'$ followed by $\theta$ takes the plane $F(v, t)$ to $F(v', t')$. Indeed, since $d[x, F(v, t)]=d[x', F(v', t')]$, we have
\begin{equation}
\label{invariance::equal}
\frac{(x \cdot v-t)^2}{||v||}= \frac{(x' \cdot v'-t')^2}{||v'||}.
\end{equation}
This implies that $||v||/||v'||$ is a square. From this we deduce, just as in the proof of Lemma \ref{normalstructure} above that either both $||v||$ and $||v'||$ are squares or they are both non-squares. Since we are only considering $||v||$ and $||v'||$ that are equal to $1$ or $\gamma$, we conclude that $||v||=||v'||$. From the equation (\ref{invariance::equal}), we have $x \cdot v-t= \pm (x' \cdot v'-t')$. Without loss of generality, we assume that $x'=0$. Since $||v||=||v'||\ne 0$, there exists a rotation $\theta \in O_d(\F_q)$ such that $\theta v=\pm v'$. Thus we have the following
 \begin{align*}
\{ \theta(y-x) : y\cdot v = t\} &= \{ z : (\theta^{-1}z+x) \cdot v = t\} \\ & = \{z : z \cdot \theta v = t- x \cdot v \} \\ & = \{z : \pm z \cdot v' =  \pm t' \} \\ &  = \{z : z \cdot v' = t' \}.
\end{align*}
In other words, the translation from $x$ to $x'$ followed by the rotation $\theta$ about $x'$ takes the plane $F(v, t)$ to the plane $F(v', t')$. This concludes the proof of the lemma.
\end{proof}

\vskip.125in 

Before proving Theorem \ref{maind}, we need to review the fourier transform of functions on $\mathbb{F}_q^d$. Let $\chi$ be a non-trivial additive character on $\mathbb{F}_q$. For a function $f: \mathbb{F}_q \to \mathbb{C}$, we define 
\[\widehat{f}(m)=q^{-d} \sum_{x \in \mathbb{F}_q^d} \chi(-x \cdot m) f(x).\]
It is clear that\[ f(x)=\sum_{m \in \mathbb{F}_q^d} \chi(x \cdot m) \widehat{f}(m),\] and
\[ \sum_{m \in \mathbb{F}_q^d} {|\widehat{f}(m)|}^2=q^{-d} \sum_{x \in \mathbb{F}_q^d} {|f(x)|}^2.\]

We are now ready to prove Theorem \ref{maind}.
\begin{proof}[Proof of Theorem \ref{maind}]
It follows from Lemma \ref{invariance} that it suffices to prove that 
\[\#\left\{\frac{(x \cdot v-t)^2}{||v||}: x \in E; (v,t) \in F\right \} \ge \frac{ {|E|}^2{|F|}^2}{2q^{-1}{|F|}^2{|E|}^2+2q^{d-1} \max_{v \in V_{\gamma}} F(v,t) \cdot |E||F|}.\]
For $r\in \mathbb{F}_q$, let 
$$ \nu(r):=\sum_{(x \cdot v-t)^2=r||v||} E(x)F(v,t).$$ 
By the Cauchy-Schwartz inequality, 
$$ {|E|}^2{|F|}^2={\left(\sum_r \nu(r) \right)}^2 \leq \sum_{r\in \mathbb{F}_q} \nu^2(r)\cdot \#\left\{\frac{(x \cdot v-t)^2}{||v||}: x \in E; (v,t) \in F\right \}.$$ 
This implies that 
\[ \#\left\{\frac{(x \cdot v-t)^2}{||v||}: x \in E; (v,t) \in F\right \}\ge \frac{|E|^2|F|^2}{\sum_{r\in \mathbb{F}_q}\nu(r)^2}.\]
We now are going to show that
\[\sum_{r\in \mathbb{F}_q}\nu(r)^2\le 2q^{-1}{|F|}^2{|E|}^2+2q^{d-1}|F||E| \cdot \max_{v \in V} \sum_t F(v,t).\]
Indeed, applying Cauchy-Schwarz inequality again gives us
\begin{align*}
 \sum_{r\in \mathbb{F}_q} \nu^2(r) &\leq |F| \sum_{\substack{x, x', v, t\\ d[x, F(v, t)]=d[x', F(v, t]}} E(x)E(x')F(v, t)\\
&= |F|\left(\sum_{x \cdot v-x' \cdot v=0} F(v,t) E(x)E(x')+\sum_{x \cdot v+x' \cdot v-2t=0} F(v,t) E(x)E(x')\right)=|F|(I+II).
\end{align*}
We now bound $I$ and $II$ as follows.
\begin{align}\label{keymoment1}
I=\sum_{x \cdot v-x' \cdot v=0} F(v,t) E(x)E(x')&=q^{-1}{|F|}{|E|}^2+q^{-1}\sum_{s \not=0} \sum_{v, t, x, x'} \chi(sv \cdot (x-x')) F(v,t) E(x)E(x')\nonumber\\
&=q^{-1}{|F|}{|E|}^2+q^{-1}\sum_{s \not=0} \sum_{v, t, x, x'} \chi(sv \cdot (x-x')) F(v,t) E(x)E(x')\nonumber\\
&=q^{-1}{|F|}{|E|}^2+q^3 \sum_{s \not=0} \sum_{v,t} {|\widehat{E}(sv)|}^2 F(v,t)\nonumber\\
&\leq q^{-1}{|F|}{|E|}^2+q^{2d-1} \cdot \max_{v \in V} \sum_t F(v,t) \cdot \sum_{z \in \mathbb{F}_q^d} {|\widehat{E}(z)|}^2\nonumber\\
&=q^{-1}{|F|}{|E|}^2+q^{d-1}|E| \cdot \max_{v \in V} \sum_t F(v,t),
\end{align}
where we used $\sum_{z\in \mathbb{F}_q^d}{|\widehat{E}(z)|}^2=q^{-d}|E|.$
\begin{align}\label{keymoment2}
II=\sum_{x \cdot v-x' \cdot v=2t} F(v,t) E(x)E(x')&=q^{-1}{|F|}{|E|}^2+q^{-1}\sum_{s \not=0} \sum_{v, t, x, x'} \chi(sv \cdot (x+x'))\chi(2st) F(v,t) E(x)E(x')\nonumber\\
&=q^{-1}{|F|}{|E|}^2+q^{-1}\sum_{s \not=0} \sum_{v, t, x, x'} \chi(sv \cdot (x+x')) \chi(2st)F(v,t) E(x)E(x')\nonumber\\
&=q^{-1}{|F|}{|E|}^2+q^3 \sum_{s \not=0} \sum_{v,t} {\widehat{E}(sv)}{\widehat{E}(sv)}\chi(st+st) F(v,t)\nonumber\\
&\le q^{-1}{|F|}{|E|}^2+q^{2d-1} \sum_{s \not=0} \sum_{v,t} {|\widehat{E}(sv)|}^2 F(v,t)\nonumber\\
&\leq q^{-1}{|F|}{|E|}^2+q^{2d-1}\cdot \max_{v \in V} \sum_t F(v,t) \cdot \sum_{z \in \mathbb{F}_q^d} {|\widehat{E}(z)|}^2\nonumber\\
&=q^{-1}{|F|}{|E|}^2+q^{d-1}|E| \cdot \max_{v \in V} \sum_t F(v,t).
\end{align}
\vskip.125in 
Putting (\ref{keymoment1}) and (\ref{keymoment2}) together, we obtain
\[\sum_{r\in \mathbb{F}_q}\nu(r)^2\le 2q^{-1}{|F|}^2{|E|}^2+2q^{d-1}|F||E| \cdot \max_{v \in V} \sum_t F(v,t).\]
\vskip.125in

We conclude that 
\[\#\left\{\frac{(x \cdot v-t)^2}{||v||}: x \in E; (v,t) \in F\right \} \ge \frac{ {|E|}^2{|F|}^2}{2q^{-1}{|F|}^2{|E|}^2+2q^{d-1} \max_{v \in V_{\gamma}} F(v,t) \cdot |E||F|}.\]
Hence, 
$$ |\Delta(E,F)| \ge  \frac{ {|E|}^2{|F|}^2}{2q^{-1}{|F|}^2{|E|}^2+2q^{d-1} \max_{v \in V_{\gamma}} F(v,t) \cdot |E||F|}.$$ 
This concludes the proof once we note that 
$$ \max_{v \in V_{\gamma}} F(v,t) \leq q.$$

\end{proof}
\vskip.25in

\end{document}